\documentclass[11pt,reqno]{amsart}

\usepackage[T1]{fontenc}
\pdfoutput=1
\usepackage{mathptmx,microtype,hyperref,graphicx,dsfont}
\usepackage{amsthm,amsmath,amssymb}
\usepackage[foot]{amsaddr}
\usepackage{enumitem}
\usepackage[nameinlink,capitalise]{cleveref}
\usepackage[font=footnotesize,margin=2em]{caption}
\usepackage{cite}

\theoremstyle{plain}
\newtheorem{theorem}{Theorem}
\newtheorem{lemma}[theorem]{Lemma}
\newtheorem{claim}[theorem]{Claim}

\newcommand\cS{\mathcal{S}}
\newcommand{\cT}{{\mathcal T}}

\newcommand\R{\mathbb{R}}

\renewcommand{\P}{{\mathbb P}}
\newcommand\E{{\mathbb E}}

\newcommand{\eps}{\varepsilon}
\renewcommand{\le}{\leqslant}
\renewcommand{\ge}{\geqslant}

\newcommand{\1}{{\bf 1}}


\author[B. Kolesnik]{Brett Kolesnik}
\address{University of Warwick, Department of Statistics}
\email{brett.kolesnik@warwick.ac.uk}

\keywords{beta-coalescent; 
beta-splitting tree; 
branching process; 
central limit theorem; 
contraction method; 
distributional recurrence;
phylogenetic tree;
random tree}
\subjclass[2010]{05C05;	
60F05;	
60C05; 	
60J90;	
92B10}	

\begin{document}

\title[Critical beta-splitting, via contraction]
{Critical beta-splitting, via contraction}

\begin{abstract} 
The critical beta-splitting tree, introduced by Aldous, 
is a Markov branching phylogenetic tree.   
Aldous and Pittel recently proved, amongst other results, 
a central limit theorem for the 
height 
of a random leaf. 
We give an alternative proof, 
via contraction methods for 
random recursive structures. 
These methods were developed by 
Neininger and R\"{u}schendorf, 
motivated by 
 Pittel's article 
``Normal convergence problem? 
Two moments and a recurrence may be the clues.''
Aldous and Pittel estimated the leading order terms
in the first two moments. More recently, 
Aldous and Janson 
obtained an asymptotic expansion for the average height. 
We show that a central limit theorem follows, 
and bound the distance to normality.  
Our results also apply to the continuous version of the model, 
in which branching times are exponential. 

\end{abstract}

\maketitle


\begin{figure}[h]
\centering
\includegraphics[scale=1]{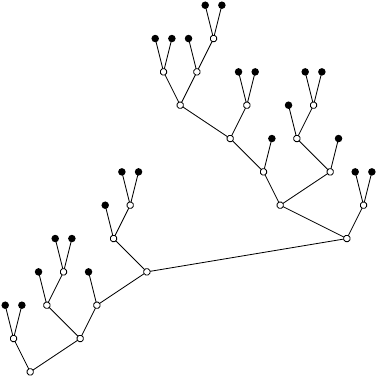}
\caption{A critical beta-splitting tree with leaves
labelled by $\{1,\ldots,23\}$. 
The location of an internal node 
indicates when and where 
the 
set of leaves above it was split into two subsets. 
Initially, $\{1,\ldots,23\}$ splits 
into $\{1,2\}$ and $\{3,\ldots,23\}$.
Then, $\{1,2\}$ splits into $\{1\}$ and $\{2\}$;  
$\{3,\ldots,23\}$ splits into 
$\{3,4,5\}$ and $\{6,\ldots,23\}$; etc., 
until only the singleton sets $\{1\},\ldots,\{23\}$ remain. 
}
\label{F_Tn}
\end{figure}

\newpage

\section{Introduction}\label{S_intro}

The 
{\it critical beta-splitting tree} $\cT_n$, 
introduced by Aldous \cite{Ald96}, 
is a random recursive combinatorial structure, 
constructed as follows: 

We begin with the set $\{1,\ldots,n\}$. (We assume that $n\ge2$, 
as otherwise $\cT_1$ is a single vertex.) 
Let 
\begin{equation}\label{E_vartheta}
\vartheta(n) = \sum_{i=1}^n \frac{1}{i}\sim\log n
\end{equation}
denote the harmonic sum. 
The first {\it split} occurs between some $i$ and $i+1$
with probability 
\begin{equation}\label{E_pni}
p(n,i)=
\frac{1}{2\vartheta (n-1)}
\frac{n}{i(n-i)},
\quad\quad
1\le i\le n-1, 
\end{equation}
in which case $\{1,\ldots,n\}$ separates  
into $\{1,\ldots,i\}$
and $\{i+1,\ldots,n\}$.  
We continue recursively, splitting 
$\{1,\ldots,i\}$
and $\{i+1,\ldots,n\}$
independently, etc., until 
only the singleton sets $\{1\},\ldots,\{n\}$ remain. 

The tree $\cT_n$ is constructed using the set $\cS$ of subsets of 
$\{1,\ldots,n\}$ determined by the above splitting procedure: 
For each $1\le j\le n$, a leaf $v(j)=v(\{j\})$ is placed at {\it height} 
\[
h(j)=\#\{S\in\cS:j\in S\}-1. 
\]
An {\it internal node} $v(S)$ is added
to the tree, for each $S\in\cS$ with $\#S>1$. 
The two children of $v(S)$ are $v(S_1)$
and $v(S_2)$, 
where $S_1,S_2\in\cS$ are the unique
pair for which $S=S_1\cup S_2$. 
See Figure  \ref{F_Tn} for an example. 

There are $n-1$ internal nodes in total, 
one between 
each $i$ and $i+1$. 
The first internal node $\rho=v(\{1,\ldots,n\})$
is called the {\it root} of $\cT_n$. 
The ``$-1$'' in the formula for $h(j)$ 
accounts for the singleton set $\{j\}\in\cS$, 
which does not contribute to the height of $v(j)$. 
Indeed, the height $h(j)$ is simply the graph distance 
(number of edges) 
between $\rho$ and $v(j)$. 

As discussed in \cite{Ald96}, 
the tree $\cT_n$ is ``critical'' in the following sense: 
A tree could be constructed in a similar way, 
but with $p(n,i)$
proportional 
to  $i^\beta(n-i)^\beta$. 
The value $\beta=-1$ in the above construction 
is of particular interest, since, at this point, 
typical heights $h(j)$ 
switch from polynomial 
to poly-logarithmic order.

\subsection{Results} 

Amongst other results, 
Aldous and Pittel \cite{AP23} 
recently proved a central limit theorem for 
$H_n=h(J_n)$, 
where $J_n$ is uniformly random in $\{1,2,\ldots,n\}$
and independent of the tree $\cT_n$. 
In other words, $H_n$ is the height 
of a random leaf in $\cT_n$. 

\begin{theorem}[{Aldous and Pittel \cite[Theorem 1.7]{AP23}}]
\label{T_CLT}
As $n\to\infty$, 
we have that 
\[
\frac{H_n-\frac{1}{2\zeta(2)}\log^2n}
{\sqrt{\frac{2\zeta(3)}{3\zeta(2)^3}\log^3 n}}
\]
is asymptotically standard normal distributed. 
\end{theorem}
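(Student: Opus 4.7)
Tracking the random leaf $J_n$, I would first derive a one-step distributional recurrence. Given the first split $I_n$ with $\P(I_n=i)=p(n,i)$, the leaf $J_n$ lies in the left subtree with probability $I_n/n$; writing $K_n$ for the size of the subtree containing $J_n$, the symmetry $p(n,i)=p(n,n-i)$ visible in \eqref{E_pni} gives
\[
\P(K_n=k)=\frac{2k}{n}\,p(n,k)=\frac{1}{\vartheta(n-1)(n-k)},\quad 1\le k\le n-1,
\]
and the recursive structure yields
\[
H_n \stackrel{d}{=} 1+H_{K_n},
\]
where the copy of $H_{K_n}$ on the right is independent of $K_n$.

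From this identity, the mean recursion $\mu_n = 1 + \E[\mu_{K_n}]$ reproduces the leading term of Aldous--Pittel, refined by the Aldous--Janson expansion. Conditioning on $K_n$ and using that the centred height is independent of $K_n$ yields the exact variance identity
\[
\sigma_n^2 = \E[\sigma_{K_n}^2] + \mathrm{Var}(\mu_{K_n}).
\]
Combined with Riemann-sum estimates of $\E[\log^j(n/K_n)]$ from the explicit law of $K_n$, this confirms the asymptotic $\sigma_n^2 \sim \frac{2\zeta(3)}{3\zeta(2)^3}\log^3 n$ self-consistently.

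Normalising $Y_n=(H_n-\mu_n)/\sigma_n$, the recurrence becomes
\[
Y_n \stackrel{d}{=} A_n\, Y_{K_n}^* + b_n,\quad A_n=\frac{\sigma_{K_n}}{\sigma_n},\quad b_n=\frac{1+\mu_{K_n}-\mu_n}{\sigma_n},
\]
with $Y_{K_n}^*$ an independent copy of $Y_{K_n}$. I would then invoke the Neininger--R\"uschendorf contraction method in the Zolotarev metric $\zeta_3$, in the degenerate regime $A_n\to 1$ in $L^3$ where the variance $\mathrm{Var}(b_n)=1-\E[A_n^2]$ supplies the asymptotic Gaussian content. Coupling the standard normal $N$ via $N\stackrel{d}{=} A_n N^* + \tilde b_n$, with $\tilde b_n$ an independent Gaussian matching the first two moments of $b_n$, yields a contraction bound of the form
\[
\zeta_3(Y_n,N) \le \sum_{k=1}^{n-1} \P(K_n=k)\Big(\tfrac{\sigma_k}{\sigma_n}\Big)^{\!3}\zeta_3(Y_k, N) + \zeta_3(b_n,\tilde b_n),
\]
whose iteration along the spine to $J_n$ gives $\zeta_3(Y_n,N)\to 0$ and hence the claimed weak convergence.

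The main obstacle is the sharp error accounting: because the spine to $J_n$ has depth $\Theta(\log^2 n)$ in expectation, the per-step toll $\zeta_3(b_n,\tilde b_n) = O(\E|b_n|^3)$ must be $o(1/\log^2 n)$, which forces carrying the Aldous--Janson expansion of $\mu_n$ to sufficiently high precision that $b_n$ is tightly approximated by its Gaussian match in the third moment. The continuous version of the tree, where the additive $1$ is replaced by an independent $\mathrm{Exp}(\vartheta(n-1))$ waiting time, fits the same framework with only notational changes, and the approach should also deliver a quantitative rate for the distance to normality.
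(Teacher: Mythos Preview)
Your plan is essentially the paper's: the same one-step recurrence, the same normalisation, and the same Neininger--R\"uschendorf $\zeta_3$ contraction in the degenerate regime of \cite{NR04b}. The paper likewise notes that the standard hypotheses of \cite{NR04b} fail here (because $\E\log(K_n/n)\to0$) and so re-enters the proof, bounding the per-step toll by $O(\log^{-5/2}n)$ and closing the induction to obtain $\zeta_3(Y_n,N)=O(\log^{-1/2+\eps}n)$.

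There is, however, a real gap in your error accounting. The toll in the $\zeta_3$ contraction is not just $\zeta_3(b_n,\tilde b_n)=O(\E|b_n|^3)$; the coupling $N\stackrel{d}{=}A_nN^*+\tilde b_n$ does not hold as written because $A_n=\sigma_{K_n}/\sigma_n$ is random and correlated with $b_n$, so $A_nN^*+\tilde b_n$ is not Gaussian. The correct bound (see \cite[(23)]{NR04b}) produces several further terms, and the binding one turns out to be $\E\big|\sigma_{K_n}^2/\sigma_n^2-1\big|^{3/2}$, not $\E|b_n|^3$. With only the Aldous--Pittel estimate $\sigma_n^2=A_*\log^3n+O(\log^2n)$ this term is merely $O(\log^{-3/2}n)$, which by your own spine heuristic (depth $\Theta(\log^2n)$) is insufficient. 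What is actually needed is a second-order expansion $\sigma_n^2=A_*\log^3n+B_*\log^2n+O(\log n)$; the paper derives this (Lemma~\ref{L_varHn2}) from the Aldous--Janson expansion of $\mu_n$ via the exact variance recursion you wrote down. So your instinct that Aldous--Janson precision is required is correct, but it enters through the variance, not through $\E|b_n|^3$ (which is already $O(\log^{-5/2}n)$ from the cruder Aldous--Pittel mean estimate).
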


In the above, as usual,  
for $s=a+ib$ with $a>1$, 
\[
\zeta(s)=\sum_{n=1}^\infty \frac{1}{n^s}
\]
is the 
{\it Riemann zeta function}. 
For convenience, we write 
$\log^\gamma n=(\log n)^\gamma$. 

Many other 
features
of the beta-splitting
tree have been analyzed 
recently by 
Aldous and Pittel \cite{AP23}; 
Aldous and Janson \cite{AJ23,AJ24a,AJ24b}; 
Aldous, Janson and Li \cite{AJL24};  and 
Iksanov \cite{Iks24,Iks24b}.

\subsection{Purpose}
\label{S_purpose}

Our purpose is to give an alternative 
proof of the asymptotic normality of
$H_n$. 
We will use the contraction methods 
of 
Neininger and R\"{u}schendorf \cite{NR04a,NR04b} (cf.\ 
\cite{Ros91,Ros92,RR95,RR01}), 
together with the 
estimates for the mean and variance of $H_n$  
obtained by Aldous and Pittel \cite{AP23},  and 
further refinements 
by Aldous and Janson \cite{AJ24b} 
(see Section \ref{S_moments}). 
In fact, as we will see, the beta-splitting tree
is a borderline case, where the 
contraction results (with trivial fixed point) proved in 
\cite{NR04b} do not apply as stated. 
We will revisit and adapt the proofs
to deal with the situation at hand.  

The contraction method also applies to the continuous
version of the model (see Section \ref{S_ctsT}), in which branching
times are exponential, yielding 
an alternative proof of the central 
limit theorems proved by various other means in  
\cite[Theorem 1.7]{AP23}, 
\cite[Theorem 2]{AJ23}
and 
\cite[Theorem 1.5]{AJ24b}
for the {\it time-height} $\hat H_n$
of a random leaf. 
One advantage of the contraction method is that,  
once the first two moments have been estimated,  
essentially the same proof establishes the asymptotic 
normality of $H_n$
and $\hat H_n$, 
along with a bound 
on the
distance 
(order $\log^{-1/2+\eps} n$, 
with respect to the 
Zolotarev metric)
from normality; 
see Theorems  
\ref{T_CLT2} and \ref{T_CLThatH}.

\subsection{Discussion}
\label{S_discussion}

Let us also mention a 
connection with the number of collisions in the $\beta(2,b)$-coalescent, 
studied by 
Iksanov, Marynych and M\"{o}hle \cite{IMM09}. 
For similar reasons, 
\cite{NR04b}
does not directly apply in this setting. 
In \cite{IMM09}, this obstacle is overcome 
by identifying a more complicated recursive 
property of the $\beta(2,b)$-coalescent (see (14) in \cite{IMM09}) 
for which \cite{NR04b} applies.  
However, they ask \cite[Remark 1.6]{IMM09}
if the central limit theorem \cite[Theorem 1.5]{IMM09}
can be proved more directly. 
Our current strategy 
suggests a possible route towards this open question.
See Section \ref{S_Collisions} for more details.

Finally, let us remark that 
Oleksandr Iksanov has 
informed us, by private communication, that 
Theorem \ref{T_CLT} can be derived 
by Theorem 12 in Gnedin, Pitman and Yor \cite{GPY06} 
and by Theorem 3.1(a) in Gnedin and Iksanov \cite{GI12}. 
In fact, Iksanov \cite{Iks24b} 
has established the joint
convergence of $(H_n,\hat H_n)$, 
via 
regenerative compositions \cite{GP05,GI12}.

\subsection{Acknowledgments}

This project began at the University
of Oxford, while we were
supported by a 
Florence Nightingale Bicentennial Fellowship 
at the Department of Statistics 
and a Senior Demyship at Magdalen College. 
We thank 
David Aldous, 
Oleksandr Iksanov, 
Svante Janson, 
Martin M\"{o}hle, 
Ralph Neininger and 
Boris Pittel 
for enlightening conversations,
and the anonymous referees 
for their careful reading 
and helpful comments.

\section{Background}
\label{S_dis}

\subsection{Pittel's principle}

The limit theory in
\cite{NR04a} was, in part, developed in response
to the work of Pittel \cite{Pit99}, in which 
 limit theorems are proved for various combinatorial 
quantities of interest (e.g., the independence number 
of a uniformly random labelled tree) 
with mean and variance that are close to linear. 

The following line of reasoning is referred to as  
``Pittel’s principle'' in \cite[p.\ 379]{NR04a}. 
Indeed, in \cite[p.\ 1260]{Pit99}, the author states that:  
\begin{quote}
For various global characteristics of large size 
combinatorial structures [...]\
one can usually estimate the mean and the variance, 
and also obtain a recurrence 
for the generating function 
[...].
As a heuristic principle based on our experience, 
we claim that such a characteristic is asymptotically 
normal if the mean and the variance are “nearly linear” [...]. 
The technical reason is that in such a case the 
moment generating function [...]\
of the normal distribution 
with the same two moments “almost” satisfies the recurrence.
\end{quote}

\subsection{Contraction theory}

A general theory is developed in \cite{NR04a}, 
which, in particular, yields limit theorems in such situations, 
as in \cite{Pit99}; 
see \cite[Corollary 5.2]{NR04a}. 
In fact, their results apply to a large family of 
random structures $X_n$, which satisfy a distributional 
recurrence of the form
\begin{equation}\label{E_NR}
X_n\overset{d}{=} \sum_{r=1}^K A_r(n)X^{(r)}_{I_r^{(n)}}+b_n. 
\end{equation}
As discussed in \cite{NR04a}, 
such situations arise, e.g., in divide-and-conquer
type algorithms. 
In this context, $b_n$ is called the ``toll function,'' 
associated with the ``cost'' of splitting into smaller, but similar,  
subproblems. 

Under certain conditions, 
a limit theorem can be proved
for $X_n$ satisfying \eqref{E_NR}, via the 
so-called 
{\it contraction method}. 
Roughly speaking, this strategy   
aims to identify the limiting distribution of $X_n$ 
by means of the fixed point equation 
\begin{equation}\label{E_NRfp}
X
\overset{d}{=} 
\sum_{r=1}^K A_r^* X^{(r)} +b^*, 
\end{equation}
obtained 
by taking $n\to\infty$ in \eqref{E_NR}.
The normal distribution is associated with  
the situation that 
$\sum_{i=1}^K (A_r^*)^2=1$ and 
$b^*=0$. 

See \cite[Theorem 5.1 and Corollary 5.2]{NR04a}
for univariate results. 
See also 
\cite[\S5.4]{NR04a}
for discussions on the multivariate case, 
when $K=K_n$ is random, and potentially even 
$K_n\to\infty$. 
Several applications are 
discussed in
\cite[\S5.2--5.3]{NR04a} and \cite[\S4--5]{NR04b}. 
In many cases, previous results  
follow more easily using these
techniques. 
We were introduced to these methods   
while studying 
randomized importance sampling algorithms 
for perfect matchings  
\cite{DK21} (cf.\ Neininger and Straub \cite{NS22}).

\subsection{Beta-splitting recurrence}

The height $H_n$ of 
a random leaf in 
the critical beta-splitting tree $\cT_n$
satisfies a
simple recurrence of the form \eqref{E_NR}. 
Specifically, we claim the following. 

\begin{lemma}\label{L_rec}
We have that 
\begin{equation}\label{E_rec}
H_n
\overset{d}{=} 
H_{I_n}
+1, 
\end{equation}
where
\begin{equation}\label{E_In}
\P(I_n=i) = \frac{1}{(n-i)\vartheta (n-1)}, 
\quad\quad 1\le i\le n-1. 
\end{equation}
\end{lemma}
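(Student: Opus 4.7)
The plan is to condition on the first split of $\{1,\ldots,n\}$. By the recursive construction of $\cT_n$, conditional on the initial split occurring between $j$ and $j+1$ (an event of probability $p(n,j)$), the subtrees rooted at the two children of $\rho$ are independent copies of $\cT_j$ and $\cT_{n-j}$, relabelled by $\{1,\ldots,j\}$ and $\{j+1,\ldots,n\}$ respectively. Since the leaf $J_n$ is uniform on $\{1,\ldots,n\}$ and independent of the tree, conditional on this split it lies in the left block with probability $j/n$ and in the right block with probability $(n-j)/n$; in either case its conditional position within the relevant block is uniform. Moving to the subtree containing $J_n$ adds exactly $1$ to its height.

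It follows directly that $H_n \overset{d}{=} 1 + H'_{I_n}$, where $I_n$ denotes the size of the subtree containing $J_n$ and $H'_i \overset{d}{=} H_i$ is independent of $I_n$. The remaining task is to identify the law of $I_n$. For $1\le i\le n-1$, the event $\{I_n = i\}$ arises either from a split at position $i$ followed by $J_n$ landing among the first $i$ labels, contributing $p(n,i)\cdot i/n$, or from a split at position $n-i$ followed by $J_n$ landing among the last $i$ labels, contributing $p(n,n-i)\cdot i/n$. Using the symmetry $p(n,i)=p(n,n-i)$ evident from \eqref{E_pni}, and substituting the explicit formula, these contributions combine to give
\[
\P(I_n=i)=2\cdot p(n,i)\cdot\frac{i}{n}=\frac{1}{(n-i)\,\vartheta(n-1)},
\]
as required.

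I do not anticipate any serious obstacle. The only point requiring care is the ``diagonal'' case when $n$ is even and $i=n/2$: the two contributions above then correspond to the same split and one must not double count. A direct check confirms that the single contribution $p(n,n/2)=\frac{2}{n\vartheta(n-1)}$ coincides with $\frac{1}{(n-n/2)\vartheta(n-1)}$, so no separate case is needed. As a sanity check, $\sum_{i=1}^{n-1}\P(I_n=i)=\vartheta(n-1)^{-1}\sum_{k=1}^{n-1}k^{-1}=1$.
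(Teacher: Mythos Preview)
Your proof is correct and follows essentially the same route as the paper's: condition on the first split, use that the random leaf falls in the size-$i$ block with probability $i/n$, and invoke the symmetry $p(n,i)=p(n,n-i)$ to obtain $\P(I_n=i)=\tfrac{2i}{n}\,p(n,i)$. Your added remark on the case $i=n/2$ is harmless but in fact unnecessary, since even then the two contributions correspond to disjoint events (same split, but $J_n$ in the left versus the right block), so summing them is legitimate without a separate check.
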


\begin{proof}
This follows by the construction
of $\cT_n$ and the symmetry of the splitting 
distribution $p(n,i)$. 
More precisely, consider the first split
in the construction of $\cT_n$, which adds 
the height ``$+1$'' in \eqref{E_rec} to the tree (see Figure \ref{F_Tn}). 
If the first split occurs between $i$ and $i+1$ (with probability $p(n,i)$)
or between $n-i$ and $n-i+1$ (with probability $p(n,n-i)$), 
then the rest of the tree $\cT_n$
is constructed using two independent trees, 
started from height $1$, of sizes $i$ and $n-i$.  The random leaf is in the 
tree of size $i$ with probability $i/n$. 
Therefore, since $p(n,i)=p(n,n-i)$ (see \eqref{E_pni}), 
it follows that the height of a random vertex in a 
tree of height $n$ is $1$ plus the height 
of a random vertex in a tree of height $1\le i\le n-1$, 
with probability 
\[
\frac{i}{n}(p(n,i)+p(n,n-i))
=\frac{2i}{n}p(n,i)=
\frac{1}{(n-i)\vartheta(n-1)},
\]
as claimed. 
\end{proof}

\subsection{Contraction, with trivial fixed point}

In spite of the recurrence \eqref{E_rec}, 
the results in \cite{NR04a} do not apply. 
The issue is that 
the mean and variance
of $H_n$ are of poly-logarithmic order;
see Section \ref{S_moments} below. 
This leads to a trivial  fixed point equation 
$X\overset{d}{=}X$, 
yielding no information about $X$. 
Fortunately, the follow-up article \cite{NR04b}
deals with 
this very situation. 

Suppose that a sequence $(X_n)$ of random variables
satisfies
\begin{equation}\label{E_NRb}
X_n
\overset{d}{=} 
X_{I_n} +b_n, 
\end{equation}
where $(I_n,b_n)$ and $(X_n)$ are independent, 
and $I_n$ takes values
in $\{1,\ldots,n-1\}$.

Let $\mu_n=\E(X_n)$ and $\sigma^2_n={\rm Var}(X_n)$. 

As usual, we let $\|Y\|_p=(\E|Y|^p)^{1/p}$ 
denote the $L_p$-norm
of an $\R$-valued random variable $Y$. 

\begin{theorem}[Neininger and R\"{u}schendorf \cite{NR04b}]
\label{T_NR04b}
Suppose that $(X_n)$ satisfies \eqref{E_NRb}, with 
$\|X_n\|_3<\infty$, 
\begin{equation}\label{E_NR04b}
\limsup_{n\to\infty}
\E \log(I_n/n)<0,
\quad\quad 
\sup_{n\ge1}
\| \log(I_n/n)\|_3<\infty.
\end{equation}
Assume also that, 
for some $\alpha,\lambda,\kappa\in\R$
with $0\le\lambda<2\alpha$, 
and some $C>0$, we have that 
\begin{equation}\label{E_NR04b_params}
\|
b_n-\mu_n+\mu_{I_n}
\|_3=O(\log^\kappa n),
\quad\quad
\sigma^2_n=A_*\log^{2\alpha}n+O(\log^\lambda n), 
\end{equation}
and 
\[
\beta=
\min\{
3/2,3(\alpha-\kappa),3(\alpha-\lambda/2),\alpha-\kappa+1
\}>1.
\]
Then, as $n\to\infty$, 
\[
\frac{X_n-\mu_n}{\sqrt{A_*}\log^\alpha n}
\]
is asymptotically standard normal
distributed. 
\end{theorem}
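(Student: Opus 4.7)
The plan is to run the contraction method with the Zolotarev metric $\zeta_3$, following the general scheme of \cite{NR04a,NR04b}.

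First, normalize by setting $Y_n = (X_n - \mu_n)/s_n$ with $s_n = \sqrt{A_*}\,\log^\alpha n$, so that \eqref{E_NRb} becomes
\[
Y_n \overset{d}{=} A_n\, Y_{I_n} + B_n,\qquad A_n = \frac{s_{I_n}}{s_n},\quad B_n = \frac{b_n - \mu_n + \mu_{I_n}}{s_n},
\]
with $(I_n, b_n)$ independent of the family $(Y_m)_{m<n}$. Let $Z$ be standard normal and write $\eta_n := \zeta_3(Y_n, Z)$. The defining properties of $\zeta_3$ (translation invariance on independent sums, the homogeneity $\zeta_3(cU,cV) = |c|^3 \zeta_3(U,V)$, and convexity under mixing) yield the standard recursion
\[
\eta_n \le \E\bigl[|A_n|^3\, \eta_{I_n}\bigr] + T_n,\qquad T_n := \zeta_3(A_n Z + B_n,\, Z),
\]
in which the toll $T_n$ measures the failure of $Z$ to be a fixed point of the one-step map.

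Two estimates drive the argument. For the contraction coefficient, use $\lambda < 2\alpha$ to write $|A_n|^3 = (\log I_n/\log n)^{3\alpha}(1 + o(1))$, then Taylor-expand around $I_n = n$ to obtain
\[
\E[|A_n|^3] \le 1 + \frac{3\alpha\,\E[\log(I_n/n)]}{\log n} + O(\log^{-2}n);
\]
by the first part of \eqref{E_NR04b}, this is at most $1 - c/\log n$ for some $c > 0$ and all large $n$, with the $L^3$ hypothesis in \eqref{E_NR04b} justifying the exchange of expectation and expansion. For the toll, $A_nZ + B_n$ conditional on $(A_n, B_n)$ is $N(B_n, A_n^2)$, so the standard $\zeta_3$-estimate for Gaussian approximation bounds $T_n$ in terms of $\|B_n\|_3$ and $\|A_n^2 - 1\|_{3/2}$; by \eqref{E_NR04b_params} these are controlled by powers of $\log n$ corresponding to the exponents $\alpha-\kappa$ and $\alpha - \lambda/2$, and tracking the contributions yields $T_n = O(\log^{-\beta}n)$ with $\beta$ as in the theorem.

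Finally, close by induction on $n$ to prove $\eta_n = O(\log^{1-\beta}n) \to 0$. Because the contraction factor per step is only $1 - c/\log n$ rather than a fixed constant, geometric iteration is unavailable, and a careful asymptotic solution of the recursion loses one power of $\log n$ relative to the toll decay. The main obstacle is precisely this induction in the borderline regime: each of the exponents $3/2,\ 3(\alpha-\kappa),\ 3(\alpha-\lambda/2),\ \alpha-\kappa+1$ entering the minimum defining $\beta$ corresponds to a distinct contribution, either to $T_n$ itself or to its propagation through the weak contraction, and the condition $\beta > 1$ is exactly what is needed to close the induction at rate $\log^{1-\beta}n$. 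This is the adaptation of \cite{NR04b} flagged in \Cref{S_purpose}: the hypotheses as stated there are too stringent to cover the beta-splitting application, and the task is to push the analysis down to the borderline case $\beta > 1$.
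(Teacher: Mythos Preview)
The paper does not give a proof of \Cref{T_NR04b}: it is quoted as a result of Neininger and R\"{u}schendorf \cite{NR04b}, and the paper's own contribution is \Cref{T_CLT2}, which adapts the \emph{proof} of \Cref{T_NR04b} to a situation where its hypotheses fail. Your sketch (normalize, pass to the $\zeta_3$-recursion $\eta_n\le \E[A_n^3\eta_{I_n}]+T_n$, bound the toll, close by induction losing one power of $\log n$) is indeed the strategy of \cite{NR04b}, so as a reconstruction of that cited result it is along the right lines.

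Your final paragraph, however, misidentifies the obstruction. The condition $\beta>1$ is already a hypothesis of \Cref{T_NR04b} as stated; nothing needs to be ``pushed down'' to that regime. The borderline case flagged in \Cref{S_purpose} and \Cref{S_border} is the failure of the \emph{first} condition in \eqref{E_NR04b}: for the beta-splitting tree $\E\log(I_n/n)\sim -\zeta(2)/\log n\to 0$ (see \eqref{E_In/n}), so your contraction factor $1-c/\log n$ degenerates to $1-O(1/\log^2 n)$ and the induction you describe no longer closes at rate $\log^{1-\beta}n$. The paper's remedy is not to weaken $\beta$ but to compensate on the toll side: the variance expansion is sharpened from \eqref{E_VarHn} to \eqref{E_varHn2} (\Cref{L_varHn2}), which pushes the toll down to $O(\log^{-5/2}n)$ (\Cref{C_beta52}), and the induction (\Cref{C_ind}) is redone to absorb the extra loss of a further power of $\log n$ coming from the weaker contraction. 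So your outline is a fair summary of the cited theorem's proof, but it is not the adaptation the paper actually performs, and your diagnosis of which hypothesis is at the boundary is backwards.
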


Furthermore, it is shown that the 
distance between $X_n^*=(X_n-\mu_n)/\sigma_n$ 
and a standard normal 
random variable 
$Z$
is $O(1/\log^{\beta-1}n)$, with respect to the 
{\it Zolotarev metric} $\zeta_3$. 
As discussed in \cite{NR04b} (cf.\ \cite{Zol76,Zol77}),
for $\R$-valued random variables $U$ and $V$, 
\[
\zeta_3(U,V)=\sup|\E f(U)-\E f(V)|, 
\]
where the supremum is over all twice differentiable 
$f$, with 1-Lipschitz $f''$. Convergence with respect to 
$\zeta_3$ implies weak convergence.

\section{Asymptotic normality of $H_n$}

We will prove the following result. 

Throughout this section, we let
$\mu_n=\E(H_n)$ and $\sigma^2_n={\rm Var}(H_n)$. 
Recall that $\zeta_3$ is the Zolotarev metric, 
as discussed above. 

\begin{theorem} 
\label{T_CLT2}
Let $H_n$ be the height of a uniformly 
random leaf in the critical beta-splitting tree $\cT_n$. 
Then
\[
H_n^*=(H_n-\mu_n)/\sigma_n
\]
is asymptotically standard normal distributed. 
Furthermore, for any $\eps>0$, 
\[
\zeta_3(H_n^*,Z)
=O(\log^{-1/2+\eps} n),
\]
where $Z$ is a standard normal random variable. 
\end{theorem}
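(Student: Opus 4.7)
The plan is to apply the contraction method (with respect to the Zolotarev metric $\zeta_3$) to the distributional recurrence $H_n \overset{d}{=} H_{I_n}+1$ of Lemma \ref{L_rec}, following the template of Theorem \ref{T_NR04b}. The moment inputs are essentially already in place: by Aldous and Pittel, $\sigma_n^2=\tfrac{2\zeta(3)}{3\zeta(2)^3}\log^3 n\,(1+o(1))$, which identifies $\alpha=3/2$; and by Aldous and Janson, $\mu_n=\tfrac{1}{2\zeta(2)}\log^2 n+O(\log n)$. Combining the latter with the identity $\log^2 n-\log^2 I_n = 2\log n\cdot\log(n/I_n)-\log^2(n/I_n)$ and the (easily checked) uniform boundedness of $\|\log(I_n/n)\|_3$, one finds $\|b_n-\mu_n+\mu_{I_n}\|_3=O(\log n)$, so the natural choice is $\kappa=1$. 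These give $\alpha-\kappa+1=3/2$, matching the $\log^{-1/2+\eps}$ target exponent.

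The obstacle, as advertised in Section \ref{S_purpose}, is that the logarithmic drift condition $\limsup_n\E\log(I_n/n)<0$ of Theorem \ref{T_NR04b} fails in this model. A direct calculation from \eqref{E_In} gives
\[
\E\log(I_n/n) \;=\; \frac{1}{\vartheta(n-1)}\sum_{j=1}^{n-1}\frac{\log(1-j/n)}{j} \;\sim\; -\frac{\zeta(2)}{\log n} \;\longrightarrow\; 0,
\]
and similarly $\E\log^2(I_n/n)=O(1/\log n)$. After Taylor expanding $(\sigma_{I_n}/\sigma_n)^3\approx (\log I_n/\log n)^{9/2}$ around $I_n=n$, this shows that the $\zeta_3$-contraction coefficient satisfies $\E[(\sigma_{I_n}/\sigma_n)^3]=1-\Theta(1/\log^2 n)$: the drift is present but vanishing. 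Theorem \ref{T_NR04b} therefore does not apply as stated, and I would need to revisit its proof.

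The adaptation proceeds by induction on $n$. Linearising $H_n^*\approx (\sigma_{I_n}/\sigma_n)H_{I_n}^{*}+(1-\mu_n+\mu_{I_n})/\sigma_n$ and using the ideality of $\zeta_3$ of order $3$, one obtains a bound
\[
\zeta_3(H_n^*,Z) \;\le\; \E\bigl[(\sigma_{I_n}/\sigma_n)^3\,\zeta_3(H_{I_n}^*,Z)\bigr] + T_n,
\]
where the toll $T_n$ is controlled by $\|(1-\mu_n+\mu_{I_n})/\sigma_n\|_3^3$ and $\|(\sigma_{I_n}/\sigma_n)^2-1\|_{3/2}^{3/2}$, both of order $\log^{-3/2} n$. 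Running this with the ansatz $\zeta_3(H_m^*,Z)\le C\log^{-1/2+\eps}m$ for $m<n$ and exploiting the small but genuinely negative drift to absorb the toll should close the induction, the arbitrary $\eps>0$ compensating for the fact that the contractive margin is only of order $1/\log^2 n$. Convergence in $\zeta_3$ then implies the weak convergence asserted in the theorem. The main obstacle is exactly this last step: in the non-borderline regime of \cite{NR04b} the corresponding balance is automatic, whereas here the drift, toll, and Taylor-expansion errors enter at comparable orders and must be tracked by hand, which is also what forces the $\eps$ loss in the final rate.
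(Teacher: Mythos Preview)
Your overall plan matches the paper's strategy, but there is a genuine arithmetic gap that prevents the induction from closing. You correctly observe that the contractive margin is $1-\E[(\sigma_{I_n}/\sigma_n)^3]=\Theta(1/\log^2 n)$. Under the ansatz $\zeta_3(H_m^*,Z)\le C\log^{-1/2+\eps}m$ for $m<n$, the slack this margin produces in the inductive step is therefore only of order $C\log^{-1/2+\eps}n\cdot\Theta(1/\log^2 n)=\Theta(\log^{-5/2+\eps}n)$. For the induction to close, the toll $T_n$ must be at most this order; your estimate $T_n=O(\log^{-3/2}n)$ is a full factor $\log n$ too large, and no choice of $\eps>0$ repairs this. (Indeed, with toll $\asymp\log^{-3/2}n$ the best ansatz one could hope to push through would be $d_n=O(\log^{+1/2}n)$, which is vacuous.)

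The paper closes this gap with two refinements you have not used. First, the Aldous--Pittel expansion already gives $\mu_n=A\log^2 n+B\log n+O(1)$ with the \emph{second} coefficient identified; this yields $|1-\mu_n+\mu_{I_n}|=O(1+\log n\,|\log(I_n/n)|)$ and hence $\|1-\mu_n+\mu_{I_n}\|_3=O(\log^{2/3}n)$, so $\kappa=2/3$ rather than your $\kappa=1$, bringing the first toll term down to $O(\log^{-5/2}n)$. Second --- and this is the essential new ingredient --- the Aldous--Pittel variance bound $\sigma_n^2=A_*\log^3 n+O(\log^2 n)$ is not sharp enough for $\E|(\sigma_{I_n}/\sigma_n)^2-1|^{3/2}$: the paper explicitly notes that it yields only $O(\log^{-3/2}n)$. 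The paper therefore proves (Lemma~\ref{L_varHn2}) the refinement $\sigma_n^2=A_*\log^3 n+B_*\log^2 n+O(\log n)$, combining the Aldous--Janson mean expansion \eqref{E_AJ24} with the recursion for $\sigma_n^2$ from \cite{AP23}. With this, $|\sigma_{I_n}^2-\sigma_n^2|/\sigma_n^2=O\bigl((1+\log n\,|\log(I_n/n)|)/\log^2 n\bigr)$, and the $3/2$-moment drops to $O(\log^{-5/2}n)$. Only then does the induction (Claim~\ref{C_ind}) close. In short: the missing idea is that in this borderline regime one needs the \emph{second}-order constants in both moment expansions, and for the variance this requires a new lemma.
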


\subsection{A borderline case}\label{S_border}

In proving Theorem \ref{T_CLT2}, we cannot 
apply Theorem \ref{T_NR04b} directly. 
We have the distributional recurrence 
\eqref{E_rec}, however, 
Theorem \ref{T_NR04b} does not apply as stated 
because, 
by \eqref{E_vartheta} and \eqref{E_In}, 
\begin{align}\label{E_In/n}
\E\log^k(n/I_n)
&=
\frac{1}{\vartheta (n-1)}
\sum_{i=1}^{n-1} \frac{\log^k(n/i)}{n-i}\nonumber \\
&\sim\frac{1}{\log n} \int_0^1\frac{\log^k(1/x)}{1-x}dx
=\frac{k!\zeta(k+1)}{\log n}.
\end{align}
Therefore, 
\[
\lim_{n\to\infty}
\E \log(I_n/n)=0,
\]
and so the first condition in \eqref{E_NR04b} fails. 

As we will see, the precise second order term 
in the variance $\sigma^2_n$ (that is, identifying the constant 
$B_*$ in front of $\log^\lambda n$ in \eqref{E_NR04b_params} above) 
will play a crucial role in such a borderline case.

\subsection{Bounding the moments}
\label{S_moments}

Aldous and Pittel 
\cite[Theorem 1.2]{AP23}
showed that 
\begin{align}
\mu_n&=A\log^2n
+B\log n+O(1),\label{E_EHn} \\
\sigma^2_n&=A_*\log^3n+O(\log^2 n),\label{E_VarHn}
\end{align}
where 
\begin{equation}\label{E_AB}
A = \frac{1}{2\zeta(2)},\quad 
B = \frac{\gamma}{\zeta(2)}
+\frac{\zeta(3)}{\zeta(2)^2},\quad
A_* = \frac{2\zeta(3)}{3\zeta(2)^3}. 
\end{equation}
and 
\begin{equation}\label{E_gamma}
\gamma
=\lim_{n\to\infty}(\vartheta(n)-\log n)
\end{equation}
is the {\it Euler--Mascheroni constant}. 

By \eqref{E_rec}, \eqref{E_EHn} and \eqref{E_VarHn}, we have 
(in the notation of 
Theorem \ref{T_NR04b}) that
$b_n=1$, $\kappa=2/3$, $\alpha=3/2$ and 
$\lambda=2$. 
In particular, to see that $\kappa=2/3$, let us note 
the following. 

\begin{lemma}
We have that 
\[
\|1-\mu_n+\mu_{I_n}\|_3
=O(\log^{2/3}n). 
\]
\end{lemma}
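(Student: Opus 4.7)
The plan is to recognize $1-\mu_n+\mu_{I_n}$ as a centered version of $\mu_{I_n}$, plug in the expansion \eqref{E_EHn}, and control the resulting expression using the moment estimates of \eqref{E_In/n}.

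Taking expectations in the distributional recurrence \eqref{E_rec} gives $\mu_n = \E\mu_{I_n}+1$, hence
\[
1-\mu_n+\mu_{I_n} \;=\; \mu_{I_n}-\E\mu_{I_n},
\]
so the task reduces to bounding the $L_3$-norm of a centered random variable. Writing $\mu_m=A\log^2m+B\log m+g(m)$ with $|g(m)|$ bounded uniformly in $m\ge 2$, by \eqref{E_EHn}, and setting $L_n:=\log(n/I_n)\ge 0$ so that $\log I_n=\log n-L_n$, the identity $\log^2 I_n=\log^2 n-2\log n\cdot L_n+L_n^2$ gives
\[
\mu_{I_n}=A\log^2n+B\log n-2A\log n\cdot L_n+AL_n^2-BL_n+g(I_n).
\]
Subtracting the mean, the deterministic $\log^2 n$ and $\log n$ contributions cancel, so
\[
\mu_{I_n}-\E\mu_{I_n}=-2A\log n\,(L_n-\E L_n)+A(L_n^2-\E L_n^2)-B(L_n-\E L_n)+(g(I_n)-\E g(I_n)).
\]

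Then, by Minkowski's inequality, $\|\mu_{I_n}-\E\mu_{I_n}\|_3$ is at most the sum of the $L_3$-norms of the four terms above. The estimate \eqref{E_In/n} (applied at $k=3$ and $k=6$) yields $\|L_n\|_3=O(\log^{-1/3}n)$ and $\|L_n^2\|_3=\|L_n\|_6^2=O(\log^{-1/3}n)$, and the centered versions are controlled by twice these. The dominant contribution comes from the first term,
\[
\|\log n\,(L_n-\E L_n)\|_3\;\le\;2\log n\cdot\|L_n\|_3=O(\log^{2/3}n),
\]
while the $L_n^2$ and $L_n$ terms contribute $O(\log^{-1/3}n)$ and the $g$-term is $O(1)$, both of which are absorbed into the $O(\log^{2/3}n)$ bound.

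The main point requiring care is upgrading \eqref{E_In/n} from an asymptotic equivalence to a uniform bound $\E L_n^k\le C_k/\log n$ valid for all $n\ge 2$. The integral comparison $\int_0^1\log^k(1/x)/(1-x)\,dx=k!\,\zeta(k+1)$ controls the bulk of the sum in \eqref{E_In/n}, and the endpoint $i=n-1$ contributes only $O(\log^k(n/(n-1)))=o(1)$, which is harmless. Beyond this bookkeeping, no tools other than \eqref{E_EHn} and \eqref{E_In/n} are required.
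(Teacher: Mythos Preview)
Your proof is correct and follows essentially the same route as the paper: expand $\mu_m$ via \eqref{E_EHn}, use the algebraic identity $\log^2 I_n=\log^2 n-2\log n\cdot L_n+L_n^2$, and bound the resulting moments with \eqref{E_In/n}. The only cosmetic difference is that you first rewrite $1-\mu_n+\mu_{I_n}$ as the centered variable $\mu_{I_n}-\E\mu_{I_n}$ and then apply Minkowski, whereas the paper bounds $|1-\mu_n+\mu_{I_n}|$ pointwise by $O(1+\log n\cdot L_n)$ and cubes directly; both give the same $O(\log^2 n)$ for the third moment.
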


\begin{proof}
Using $x^2-y^2=(x-y)^2+2y(x-y)$, we have 
\begin{equation}\label{E_log2Inn}
\log^2(I_n)-\log^2 n=\log^2(I_n/n)+2\log n\log(I_n/n). 
\end{equation}
Hence, by \eqref{E_EHn}, 
\begin{equation}\label{E_kappa}
|1-\mu_n+\mu_{I_n}|
=O(1+\log n \log(n/I_n)).
\end{equation}
Therefore, by \eqref{E_In/n}, 
\[
\E|1-\mu_n+\mu_{I_n}|^3=O(\log^2 n), 
\]
and the claim follows. 
\end{proof}

As we will see, the $O(\log^2 n)$ term in
\eqref{E_VarHn} will not be sufficient in the proof of Theorem \ref{T_CLT2}
(specifically, 
when bounding $\E |\tau_n^2-G_n^2|^{3/2}$ 
in the proof of Claim \ref{C_beta52} below). 

More recently, Aldous and Janson \cite[Theorem 1.2]{AJ24b} have
obtained a detailed expansion for $\mu_n$. 
In particular, their results imply that 
\begin{equation}\label{E_AJ24}
\mu_n 
=
A\log^2n
+B\log n
+C+O(\log n /n),
\end{equation}
where
\begin{equation}\label{E_C}
C=\frac{1}{10}
+\frac{\gamma^2}{2\zeta(2)}
+\frac{\gamma\zeta(3)}{\zeta(2)^2}
+\frac{\zeta(3)^2}{\zeta(2)^3}.
\end{equation}

Combining this with arguments in 
the proof of Proposition 2.10 in \cite{AP23}, 
we obtain the following 
sharper estimate for the variance $\sigma_n^2$.

\begin{lemma}\label{L_varHn2}
We have that 
\begin{equation}\label{E_varHn2}
\sigma_n^2
=
A_*\log^3n+B_*\log^2 n+O(\log n),
\end{equation}
where
\begin{equation}\label{E_Bstar}
B_* 
=
-\frac{1}{2\zeta(2)}
-\frac{3\zeta(4)}{\zeta(2)^3}
+\frac{2\gamma\zeta(3)}{\zeta(2)^3}
+\frac{4\zeta(3)^2}{\zeta(2)^4}.
\end{equation}
\end{lemma}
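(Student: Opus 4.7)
The plan is to derive the lemma from the variance recurrence implied by \eqref{E_rec}. Since $H_n\overset{d}{=}H_{I_n}+1$, using $\E[H_{I_n}^2\mid I_n]=\sigma_{I_n}^2+\mu_{I_n}^2$ together with the mean recurrence $\mu_n=\E\mu_{I_n}+1$, a short calculation gives
\[
\sigma_n^2=\E\sigma_{I_n}^2+\operatorname{Var}(\mu_{I_n}),
\]
where the variance on the right is with respect to $I_n$. This is the recursion that underlies \cite[Proposition 2.10]{AP23}; the improvement here comes from feeding in the finer input \eqref{E_AJ24} for $\mu_n$, so that the resulting toll $\tau_n:=\operatorname{Var}(\mu_{I_n})$ is sharp enough to reveal the $\log^2 n$ coefficient.

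Next I would expand $\tau_n$. Set $V=\log(n/I_n)\ge 0$, so that $\log I_n=\log n-V$; substituting \eqref{E_AJ24} yields
\[
\mu_{I_n}=A\log^2 n+B\log n+C-(2A\log n+B)V+AV^2+O\!\bigl(I_n^{-1}\log I_n\bigr),
\]
and the $V$-free terms drop out under the variance. Writing $\xi_k:=\E V^k$, and verifying that the $O(I_n^{-1}\log I_n)$ piece gives only a negligible $O(n^{-1}\log^{O(1)}n)$ contribution after squaring (via crude bounds on $\E[I_n^{-1}\log^a I_n]$), one obtains
\[
\tau_n=(2A\log n+B)^2(\xi_2-\xi_1^2)-2A(2A\log n+B)(\xi_3-\xi_1\xi_2)+A^2(\xi_4-\xi_2^2)+o(1).
\]
To get the $\log^2 n$ coefficient of $\sigma_n^2$ one needs $\xi_k$ with two-term precision,
\[
\xi_k=\frac{k!\,\zeta(k+1)}{\log n}+\frac{d_k}{\log^2 n}+O(\log^{-3}n),
\]
with $d_k$ explicit in $\gamma$ and the nearby $\zeta$-values; this follows from Euler--Maclaurin applied to $\sum_{i=1}^{n-1}\log^k(n/i)/(n-i)$, combined with $\vartheta(n-1)=\log n+\gamma+O(1/n)$. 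After collection, the toll has the form $\tau_n=a_1\log n+a_0+O(\log^{-1}n)$, with $a_1,a_0$ polynomial in $A,B,C,\gamma$ and $\zeta(2),\zeta(3),\zeta(4)$.

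Finally, I would iterate the variance recurrence as in \cite[Proposition 2.10]{AP23}: unrolling gives $\sigma_n^2=\sum_{k\ge 0}\E\,\tau_{N_k}$, where $N_0=n$, $N_{k+1}=I_{N_k}$, absorbed at $1$. The mean depth is $\mu_n\sim A\log^2 n$, and $\log N_k$ drifts down by $\E V\sim \zeta(2)/\log N_k$ per step, which lets one evaluate the sum termwise in the expansion of $\tau_n$. Matching coefficients reproduces the leading $A_*\log^3 n$ (a sanity check against \eqref{E_VarHn}) and identifies the next term; using \eqref{E_C} and standard manipulations of the zeta values, this collapses to the expression \eqref{E_Bstar} for $B_*$. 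The principal obstacle is extracting the constant $a_0$ in the toll correctly: an error of order $o(\log n)$ in $\tau_n$ is amplified by the $\sim A\log^2 n$ iterations and spoils $B_*$, so the $O(\log^{-2}n)$ corrections to $\xi_k$ and the $O(n^{-1}\log n)$ remainder in \eqref{E_AJ24} must be tracked consistently throughout.
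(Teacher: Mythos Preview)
Your setup matches the paper's: both start from the variance recursion $\sigma_n^2=\E\sigma_{I_n}^2+\operatorname{Var}(\mu_{I_n})$ (equivalently \eqref{E_sigRec}) and compute the toll to the next order using the refined mean \eqref{E_AJ24}. The divergence is in how the recursion is then solved. You propose to unroll into $\sigma_n^2=\sum_{k\ge0}\E\,\tau_{N_k}$ and evaluate the sum via the drift $\E\log(N_{k+1}/N_k)\sim-\zeta(2)/\log N_k$ of the harmonic descent chain. This heuristic does recover $A_*\log^3n$, but extracting the $\log^2 n$ coefficient this way would require second-order control of the distribution of $\log N_k$, uniformly over $k$, which you have not supplied and which is not available in \cite{AP23} or \cite{AJL24}; your claim that the drift ``lets one evaluate the sum termwise'' hides precisely this difficulty. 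Note also that \cite[Proposition~2.10]{AP23} does not argue by Markov-chain unrolling---it proceeds by direct comparison of recursions---so the citation does not cover the step you need.

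The paper takes that comparison route. Setting $W_n=\sigma_n^2-A_*\log^3n$ and feeding in the same toll estimates you describe, one obtains $W_n=\E W_{I_n}+X+O(\log^{-1}n)$ with $X$ explicit. Since $\mu_n=\E\mu_{I_n}+1$, subtracting $X\mu_n$ leaves a recursion with toll $O(\log^{-1}n)$; a second comparison (with $g_n=\log n$, whose toll is $\E\log(n/I_n)\sim\zeta(2)/\log n$) then gives $W_n-X\mu_n=O(\log n)$, so $B_*=AX$. This bypasses any analysis of the chain $(N_k)$, and you already have every ingredient needed for it.
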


We note that 
the specific values of 
$A,B,C$ and 
$A_*,B_*$ will not play a special role in the proof 
of Theorem \ref{T_CLT2}. 
We need only that they exist. 

\begin{proof}
Put 
\[
W_n = \sigma_n^2 - A_*\log^3n. 
\]
The bound \eqref{E_VarHn} is proved in 
\cite{AP23} as follows: The mean $\mu_n$ satisfies
(see (43) in \cite{AP23})
\[
\mu_n 
=
\frac{1}{\vartheta(n-1)}\sum_{k=1}^{n-1}\frac{\mu_k}{n-k}
+1.
\]
The proof of Proposition 2.10 in \cite{AP23}
shows that $W_n$ satisfies 
\begin{equation}\label{E_Wn}
W_n 
=
\frac{1}{\vartheta(n-1)}\sum_{k=1}^{n-1}\frac{W_k}{n-k}
+O(1),
\end{equation}
and thereby concludes that $W_n = O(\mu_n)=O(\log^2 n)$, 
using \eqref{E_EHn}. The proof of \eqref{E_Wn} is based on the 
fact (see (45) in \cite{AP23}) that 
\begin{equation}\label{E_sigRec}
\sigma_n^2 = -1+\frac{1}{\vartheta(n-1)}
\sum_{k=1}^{n-1}\frac{\sigma_k^2+(\mu_n-\mu_k)^2}{n-k}.
\end{equation}

To sharpen \eqref{E_VarHn} to \eqref{E_varHn2},
we will use \eqref{E_AJ24}
to show that 
\begin{equation}\label{E_X}
W_n 
=
\frac{1}{\vartheta(n-1)}\sum_{k=1}^{n-1}\frac{W_k}{n-k}
+X+O(\log^{-1} n),
\end{equation}
where $X$ is a constant, to be determined
in \eqref{E_defX} below.

The $O(1)$ term 
in \eqref{E_Wn} arises in \cite{AP23}
when estimating the sums
\[
\sum_{k=1}^{n-1}\frac{\log^3  k}{n-k},
\quad\quad 
\sum_{k=1}^{n-1}
\frac{(\mu_n - \mu_k)^2}{n-k}.
\] 
Using \eqref{E_AJ24}, 
and other estimates in \cite{AP23}, 
we will show in Appendix \ref{A_sums}
 that 
\begin{align}
\frac{1}{\vartheta(n-1)}
\sum_{k=1}^{n-1}\frac{\log^3  k}{n-k}
&=\log^3 n 
-3\zeta(2)\log n
+O(1/\log n)\nonumber \\
&\quad\quad 
+3\gamma\zeta(2)
+6\zeta(3)\label{E_log3}
\end{align}
and 
\begin{align}
\frac{1}{\vartheta(n-1)}\sum_{k=1}^{n-1}
\frac{(\mu_n - \mu_k)^2}{n-k}
&=8A^2\zeta(3)\log n+O(1/\log n)\nonumber\\
&\quad\quad-8A(A\gamma-B)\zeta(3)
-24A^2\zeta(4).\label{E_munk}
\end{align}

Substituting \eqref{E_munk} into \eqref{E_sigRec}, we find that 
\begin{align}
\sigma_n^2 
&=
\frac{1}{\vartheta(n-1)}\sum_{k=1}^{n-1}\frac{\sigma_k^2}{n-k}
+8A^2\zeta(3)\log n+O(\log^{-1} n)\nonumber\\ 
&\quad\quad
-1
-8A(A\gamma-B)\zeta(3)
-24A^2\zeta(4).\label{E_recvar}
\end{align}
Finally, combining \eqref{E_log3} and \eqref{E_recvar}
(and noting that $8A^2\zeta(3)=3\zeta(2)A_*$ by \eqref{E_AB},
so that the $\log n$ terms cancel),
we obtain \eqref{E_X} above, with 
\begin{equation}\label{E_defX}
X =
-1
-8A(A\gamma-B)\zeta(3)
-24A^2\zeta(4)
+A_*(3\gamma\zeta(2)
+6\zeta(3)).
\end{equation}
It follows that 
\[
W_n
=(X+O(1/\log n))\mu_n
=AX\log^2 n+O(\log n).
\]
Therefore, we find that \eqref{E_varHn2} holds with 
$B_*=AX$, which by \eqref{E_AB} 
simplifies to \eqref{E_Bstar} above, 
as claimed.  
\end{proof}

It should be possible 
(cf.\ \cite[Remark 1.7]{AJ24b})
to obtain further terms in 
the expansion for $\sigma_n^2$, 
by being more careful with the 
estimates \eqref{E_log3}
and \eqref{E_munk}, 
but \eqref{E_varHn2}
will suffice for our purposes.

\subsection{The proof}
\label{S_proof}

In what follows, we will assume familiarity with the proof
of Theorem 2.1 in \cite{NR04b}, 
and the notation introduced therein. 
We will not explain the full proof given in \cite{NR04b}, 
but rather only discuss the few 
places that need adjustment. 

\begin{proof}[Proof of Theorem \ref{T_CLT2}]

Specifically, there are two main parts in 
the proof of Theorem 2.1 in \cite{NR04b}
that require attention. The first is 
Lemma 3.1
\cite{NR04b}. 
In fact, as we will see, 
the proof of this result simplifies in our special case. 
Secondly, we will revisit the upper bound
\cite[(19)]{NR04b}, as this 
estimate is used in the inductive proof of 
Lemma 3.1 in \cite{NR04b}. 

We start with the second part. 
Recall that $\P(I_n\in\{0,n\})=0$. 
We set 
$\delta=1$, and let 
$\ell_n=\log n+\1_{n=1}$ play the role of 
$L_\delta(n)$.

As noted in Section \ref{S_moments} above, 
we have that $\alpha=3/2$. 
In particular, we have 
(in the notation of  \cite{NR04b}) 
that 
\[
b^{(n)}=\frac{1-\mu_n+\mu_{I_n}}{\sqrt{A_*}\ell_n^{3/2} },\quad\quad 
\tau_n=\frac{\sigma_n}{\sqrt{A_*}\ell_n^{3/2}},\quad\quad 
G_n=\frac{\sigma_{I_n}}{\sqrt{A_*}\ell_n^{3/2}}.
\]

A crucial step in 
the proof of Theorem 2.1 in \cite{NR04b}
is bounding  
$\zeta_3(Z_n^*,N_n)$, as this corresponds to 
$r_n$ in Lemma 3.1 in \cite{NR04b}. 
For the specific case of the critical beta-splitting tree, 
we will show the following. 

\begin{claim}
\label{C_beta52}
We have $\zeta_3(Z_n^*,N_n)=O(\log^{-5/2}n)$.
\end{claim}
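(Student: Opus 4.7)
The plan is to control $\zeta_3(Z_n^*, N_n)$ by reducing to moment estimates on $b^{(n)}$ and on $G_n^2 - \tau_n^2$, and then apply the refined expansions collected in Section~\ref{S_moments}.

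Since $Z_n^* = G_n Z + b^{(n)}$, with $Z$ a standard normal independent of $(G_n, b^{(n)})$, and since $N_n$ is normal with mean $0$ and variance $\tau_n^2$, the two random variables have matching first two moments: $\E b^{(n)} = 0$, and $\E G_n^2 + \E(b^{(n)})^2 = \tau_n^2$ (the variance identity derived directly from the one-step recurrence \eqref{E_rec}). I would then establish a Zolotarev bound of the ``Gaussian-mixture vs.\ single Gaussian'' form
\[
\zeta_3(Z_n^*, N_n) \le C\bigl(\E|b^{(n)}|^3 + \E|G_n^2 - \tau_n^2|^{3/2}\bigr),
\]
for some absolute $C$. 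This follows from a third-order Taylor expansion of $g(m,v) := \E f(m + \sqrt{v}Z)$ around $(0, \tau_n^2)$, for a test function $f$ with $1$-Lipschitz $f''$. The heat-equation identity $g_v = \tfrac{1}{2} g_{mm}$, combined with the matching of the first two moments, makes the first- and second-order Taylor terms cancel, leaving a remainder controlled by $\E|b^{(n)}|^3$ and $\E|G_n^2 - \tau_n^2|^{3/2}$. I would include a short derivation for completeness, as this precise form is not stated verbatim in \cite{NR04b}.

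Next, I would verify both terms are $O(\log^{-5/2} n)$. For $\E|b^{(n)}|^3$: by the lemma just preceding this claim, $\|1 - \mu_n + \mu_{I_n}\|_3 = O(\log^{2/3} n)$, so cubing and dividing by $(\sqrt{A_*}\, \ell_n^{3/2})^3 = A_*^{3/2} \ell_n^{9/2}$ yields $\E|b^{(n)}|^3 = O(\log^2 n \cdot \log^{-9/2} n) = O(\log^{-5/2} n)$. For $\E|G_n^2 - \tau_n^2|^{3/2}$, write $L = \log(n/I_n) \ge 0$. Using the refined estimate \eqref{E_varHn2},
\[
\sigma_{I_n}^2 - \sigma_n^2 = A_*\bigl(-3L\log^2 n + 3L^2\log n - L^3\bigr) + B_*\bigl(-2L\log n + L^2\bigr) + O(\log n),
\]
and dividing by $A_*\ell_n^3$ gives $G_n^2 - \tau_n^2 = -3L/\log n + \textrm{(lower order)}$, with lower-order corrections of size $O(L/\log^2 n)$, $O(L^2/\log^2 n)$, $O(L^3/\log^3 n)$, and $O(1/\log^2 n)$. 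Since \eqref{E_In/n} and its fractional-moment analogue give $\E L^k = O(1/\log n)$ for every fixed $k > 0$, the leading term contributes $(3/\log n)^{3/2} \E L^{3/2} = O(\log^{-5/2} n)$, while each lower-order piece is easily seen to be $O(\log^{-3} n)$ or smaller.

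The main obstacle is the moment bound on $G_n^2 - \tau_n^2$: without the $B_* \log^2 n$ refinement from \eqref{E_varHn2}, the unaccounted $O(\log^2 n)$ remainder in \eqref{E_VarHn} would introduce a non-vanishing $O(1/\log n)$ additive term in $G_n^2 - \tau_n^2$, giving only $\E|G_n^2 - \tau_n^2|^{3/2} = O(\log^{-3/2} n)$ --- short by a full factor of $\log n$. Securing the correct rate $O(\log^{-5/2} n)$, on which the inductive bound in Lemma 3.1 of \cite{NR04b} depends, is precisely the reason for establishing Lemma~\ref{L_varHn2}.
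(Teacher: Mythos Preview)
Your approach is correct and reaches the same conclusion, but via a somewhat different decomposition than the paper's. The paper quotes the six-term bound from \cite[(23)]{NR04b}, namely that $\zeta_3(Z_n^*,N_n)$ is controlled (up to constants) by the expected value of
\[
|b^{(n)}|^3
+|\tau_n-1|^3
+|G_n-1|^3
+|b^{(n)}(\tau_n-1)|
+|b^{(n)}(G_n-1)|
+|\tau_n^2-G_n^2|^{3/2},
\]
and then checks each of these pieces separately. Your heat-kernel argument, exploiting $g_v=\tfrac12 g_{mm}$ together with the exact matching of the first two moments of $Z_n^*$ and $N_n$, collapses this to just the two essential terms $\E|b^{(n)}|^3$ and $\E|G_n^2-\tau_n^2|^{3/2}$. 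This is cleaner and more self-contained, at the cost of having to justify the Taylor remainder uniformly over $G_n^2\in[0,1+o(1)]$; that is not difficult here since $\tau_n\to1$ is bounded away from $0$, but it does require a short argument (e.g., bounding the $v$-direction remainder by $C|v-\tau_n^2|^2$ and then using the boundedness of $|v-\tau_n^2|$ to pass to $|v-\tau_n^2|^{3/2}$, and absorbing the cross term $|m|\,|v-\tau_n^2|$ by Young's inequality). Either way, both proofs converge on the same bottleneck: $\E|\tau_n^2-G_n^2|^{3/2}$ is $O(\log^{-5/2}n)$ only because of the refined variance expansion \eqref{E_varHn2}; with merely \eqref{E_VarHn} one obtains just $O(\log^{-3/2}n)$, exactly as you note.
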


\begin{proof}
A general upper bound 
for $\zeta_3(Z_n^*,N_n)$
is obtained at (23) in \cite{NR04b},
based on the bounds for $|\E S_2|$
and $\E|R(\hat Z_n^*,N)|$. Using
these, it follows that, up to multiplicative constants,  $\zeta_3(Z_n^*,N_n)$ is
bounded above by the expected value of 
\begin{equation}\label{E_exp52}
|b^{(n)}|^3
+|\tau_n-1|^3
+|G_n-1|^3
+|b^{(n)}(\tau_n-1)|
+|b^{(n)}(G_n-1)|
+|\tau_n^2-G_n^2|^{3/2}. 
\end{equation}

We proceed term by term. 
By \eqref{E_In/n} and \eqref{E_kappa}, 
\[
\E|b^{(n)}|^3
=O( \log^{-5/2} n). 
\]
By \eqref{E_VarHn}, 
\begin{equation}\label{E_taun}
|\tau_n-1|
\le\frac{|\sigma^2_n-A_*\log^3 n|}{A_*\log^3 n} 
=O(\log^{-1} n)
\end{equation}
Note that, using $x^3-y^3=(x-y)^3+3xy(x-y)$, we have 
\begin{equation}\label{E_log3Inn}
\log^3(I_n)-\log^3 n=\log^3(I_n/n)+3\log n\log(I_n)\log(I_n/n), 
\end{equation}
It follows by \eqref{E_VarHn} that 
\begin{equation}\label{E_Gn}
|G_n-1|
\le\frac{|\sigma^2_{I_n}-A_*\log^3 n|}{A_*\log^3 n} 
=O\left(\frac{|\log(I_n/n)|+1}{\log n}\right).
\end{equation}
Therefore, using \eqref{E_In/n}, 
\[
\E|G_n-1|^3=O(\log^{-3} n). 
\]
By \eqref{E_In/n}, \eqref{E_kappa} and \eqref{E_taun},
\[
\E |b^{(n)}(\tau_n-1)|
=O\left(
\frac{1}{\log^{5/2}n}
+\frac{\E|\log(I_n/n)|}{\log^{3/2}n}
\right)
=O(\log^{-5/2}n).
\]
Likewise, by \eqref{E_kappa} and \eqref{E_Gn},
\[
|b^{(n)}(G_n-1)|\\
=O\left(\frac{|\log(I_n/n)|+1}{\log^{5/2}n}
+\frac{|\log(I_n/n)|+|\log(I_n/n)|^2}{\log^{3/2}n}\right),
\]
and so, by \eqref{E_In/n},
\[
\E |b^{(n)}(G_n-1)|
=  O(\log^{-5/2}n).
\]
Finally, using \eqref{E_log2Inn}, 
\eqref{E_varHn2}
and \eqref{E_log3Inn}, 
we observe that 
\[
|\tau_n^2-G_n^2|
=
O\left(\frac{1}{\log^2 n}
+\frac{|\log(I_n/n)|}{\log n}\right).
\]
Therefore, by \eqref{E_In/n}, and using the fact that 
$(1+x)^{3/2}\le 1+(3/2)x+x^{3/2}$ for all $x\ge0$, 
it follows that 
\[
\E |\tau_n^2-G_n^2|^{3/2}
\le O\left(\frac{\E(1+\log n|\log(I_n/n)|)^{3/2}}{\log^3 n}
\right)
=O(\log^{- 5/2} n). 
\]
(Note that using \eqref{E_VarHn} instead of \eqref{E_varHn2}
yields only
$O(\log^{- 3/2} n)$, 
which is not sufficient for our purposes.)

Altogether, we conclude that the expected value of \eqref{E_exp52},
and so also $\zeta_3(Z_n^*,N_n)$, is 
 $O(1/\log^{5/2})$,
as claimed. 
\end{proof}

Next, we 
prove the following analogue of 
Lemma 3.1 in 
\cite{NR04b}. 

\begin{claim}
\label{C_ind}
Let $I_n$ be as in \eqref{E_In}. Suppose that 
nonnegative sequences $(d_n)$ and $(r_n)$ satisfy 
\begin{equation}\label{E_dn}
d_n\le 
\E\left[\left(\frac{\ell_{I_n}}{\ell_n}\right)^{9/2} d_{I_n}\right] 
+r_n,\quad\quad n\ge2,
\end{equation}
and 
\begin{equation}\label{E_rn}
r_n=O(\log^{-5/2} n).
\end{equation}
Then, for all small $\eps>0$, it follows that 
\[
d_n=O(\log^{-1/2+\eps}n).
\]
\end{claim}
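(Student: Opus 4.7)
The plan is to prove, by strong induction on $n$, that there exists a constant $C=C(\eps)$ with
$$d_n\le C\,\ell_n^{-1/2+\eps}\quad\text{for all }n\ge 2.$$
Substituting this ansatz into the recurrence \eqref{E_dn} and using $9/2+(-1/2+\eps)=4+\eps$ gives
$$d_n\le C\,\ell_n^{-9/2}\,\E\bigl[\ell_{I_n}^{4+\eps}\bigr]+r_n,$$
so the whole task reduces to expanding the moment $\E[\ell_{I_n}^{4+\eps}]$ to sufficient precision.

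For this, I would write $\ell_{I_n}=\ell_n-\log(n/I_n)$ (the case $I_n=1$, where the identification is off by the convention $\ell_1=1$, has probability $O(1/(n\log n))$ and is absorbed into the remainder). Setting $u=\log(n/I_n)/\ell_n\in[0,1]$, a direct convexity argument for $f(u)=(1-u)^{4+\eps}$ yields the pointwise inequality
$$(1-u)^{4+\eps}\le 1-(4+\eps)\,u+c_\eps\,u^2,\qquad u\in[0,1],$$
with $c_\eps=(4+\eps)(3+\eps)/2$. Substituting $u=\log(n/I_n)/\ell_n$, multiplying by $\ell_n^{4+\eps}$, and taking expectations, the moment asymptotics in \eqref{E_In/n} (namely $\E\log(n/I_n)\sim\zeta(2)/\ell_n$ and $\E\log^2(n/I_n)=O(1/\ell_n)$) give
$$\E\bigl[\ell_{I_n}^{4+\eps}\bigr]\le \ell_n^{4+\eps}-(4+\eps)\zeta(2)\,\ell_n^{2+\eps}\bigl(1+o(1)\bigr)+O\bigl(\ell_n^{1+\eps}\bigr).$$

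Combining these estimates and adding $r_n=O(\ell_n^{-5/2})$, I obtain
$$d_n\le C\ell_n^{-1/2+\eps}-C(4+\eps)\zeta(2)\,\ell_n^{-5/2+\eps}\bigl(1+o(1)\bigr)+O\bigl(C\ell_n^{-7/2+\eps}\bigr)+O\bigl(\ell_n^{-5/2}\bigr).$$
The crucial point is that the negative feedback term is of exact order $C\ell_n^{-5/2+\eps}$, while both error terms are strictly smaller: the first by a factor $\ell_n^{-1}$, with $C$-dependence matching the feedback; the second by a factor $\ell_n^{-\eps}$, with no $C$-dependence. Hence there exists $n_0=n_0(\eps)$, \emph{independent of $C$}, such that for $n\ge n_0$ the Taylor remainder is at most a quarter of the feedback term; then $C$ is chosen large enough to both (i) dominate the $r_n$ contribution uniformly for $n\ge n_0$ and (ii) satisfy the inductive hypothesis on the finitely many base cases $n\in[2,n_0]$ (each $d_n$ being finite).

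The main obstacle is precisely this bookkeeping: the threshold $n_0$ closing the induction must be independent of $C$, for otherwise increasing $C$ to cover base cases would push $n_0$ higher and the argument would be circular. The scheme works because the Taylor remainder scales linearly in $C$ just like the feedback term, while the external $r_n$-forcing is $C$-independent and so is absorbed simply by enlarging $C$. The specific exponent $9/2$ in \eqref{E_dn} is calibrated exactly so that the second-order Taylor correction lands at order $\ell_n^{-5/2+\eps}$, strictly dominating the $\ell_n^{-5/2}$ forcing --- this gap of $\ell_n^{\eps}$ is the whole engine of the contraction-with-trivial-fixed-point induction.
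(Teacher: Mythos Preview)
Your argument is correct and follows the same inductive scheme as the paper: assume $d_k\le C\ell_k^{-1/2+\eps}$ for $k<n$, substitute into \eqref{E_dn}, and show the right-hand side stays below $C\ell_n^{-1/2+\eps}$ once $n$ is large and $C$ is suitably chosen. (One minor bookkeeping point: your base cases should include $k=1$, since $I_n$ can equal $1$ and the recursion then calls $d_1$; this is trivially absorbed by taking $C\ge d_1$.)

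Where you differ from the paper is in the treatment of the key moment $\E[(\ell_{I_n}/\ell_n)^{4+\eps}]$. You expand $(1-u)^{4+\eps}$ to second order and track the first- and second-moment contributions separately. The paper instead uses the one-line monotonicity bound $x^{4+\eps}\le x$ for $x\in[0,1]$, which immediately gives
\[
\E\!\left[\left(\frac{\ell_{I_n}}{\ell_n}\right)^{4+\eps}\right]\le \E\!\left[\frac{\ell_{I_n}}{\ell_n}\right]
=1+\frac{\E[\log(I_n/n)]}{\log n}+\frac{\P(I_n=1)}{\log n},
\]
and the inductive step then closes under the single condition
\[
\frac{\E[\log(I_n/n)]}{\log n}+\frac{\P(I_n=1)}{\log n}+\frac{1}{\log^{2+\eps}n}\le 0,
\]
which holds for large $n$ by \eqref{E_In/n}. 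This is shorter and sidesteps the Taylor bookkeeping entirely; on the other hand, your expansion makes the mechanism (the negative drift of order $\ell_n^{-5/2+\eps}$ beating the $\ell_n^{-5/2}$ forcing by exactly $\ell_n^{\eps}$) more transparent, which is the point you emphasise in your final paragraph.
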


\begin{proof}
To see this, we will follow the proof of \cite[Lemma 3.1]{NR04b}.
We can, in fact, make some simplifications in this special case. 
Using \eqref{E_In}, \eqref{E_In/n} and \eqref{E_rn}, 
let $M>0$ and $n_1$ be such that 
$r_n
\le M/\log^{5/2} n$
and 
\[
\frac{\E[\log (I_n/n)]}{\log n}
+\frac{\P(I_n=1)}{\log n}
+\frac{1}{\log^{2+\eps} n}
\le 0
\]
for all $n\ge n_1$. 

Put
\[
R=M\vee\max\{d_k\ell^{1/2-\eps}_k:1\le k\le n_1\}.
\]
To prove the claim, we will show, by induction, that 
$d_n\le R/\ell_n^{1/2-\eps}$. 
By the choice of $R$, there is nothing to prove for $n\le n_1$. 
On the other hand, for $n> n_1$, by 
\eqref{E_dn}, the choice of $n_1$, and 
the inductive hypothesis, 
\begin{align*}
d_n
&\le 
\E\left[\left(\frac{\ell_{I_n}}{\ell_n}\right)^{9/2}
\frac{R}{\ell^{1/2-\eps}_{I_n}}
\right]
+ \frac{M}{\log^{5/2} n}
\le\frac{R}{\ell^{1/2-\eps}_n}
\left[\E\left(\frac{\ell_{I_n}}{\ell_n}\right)
+\frac{1}{\log^{2+\eps} n}
\right]\\
&\le\frac{R}{\ell^{1/2-\eps}_n}
\left(1+
\frac{\E[\log (I_n/n)]}{\log n}
+\frac{\P(I_n=1)}{\log n}
+\frac{1}{\log^{2+\eps} n}
\right)
\le \frac{R}{\ell^{1/2-\eps}_n}, 
\end{align*}
as required. 
\end{proof}

This 
finishes the proof, 
as the rest of the proof of 
Theorem 2.1 in 
\cite{NR04b}
applies, without any further changes. 
Using Claim \ref{C_beta52}, we apply 
Claim \ref{C_ind} to conclude that 
$\zeta_3(H_n^*,Z)
=O(\log^{- 1/2+\eps} n)$.  
\end{proof}

\section{Final remarks}

\subsection{Splitting at exponential times}
\label{S_ctsT}

In \cite{AP23}, a variation of 
the critical beta-splitting tree is also considered, 
in which  
splitting events occur continuously in time. 
Specifically, subsets have 
exponential holding times, 
with rates $\vartheta(k-1)$
on subsets of size $k$. 

More precisely, recall the construction of
$\cT_n$ in Section \ref{S_intro}. 
When a set $S$ is split into subsets $S_1,S_2$
two edges are drawn from $v(S)$ to its children $v(S_1)$
and $v(S_2)$. In the discrete model, 
both of these edges have  
length 1. In the continuous model, the length of 
these edges are determined by an exponential 
random variable with rate $\vartheta(\#S-1)$.

In \cite{AP23}, a central limit theorem is proved 
for the {\it time-height} $\hat H_n$ of a random leaf. 
See also \cite{AJ23,AJ24b} 
for alternative 
proofs. 

We note that $\hat H_n$ satisfies a similar 
distributional recurrence. 
Specifically, instead of \eqref{E_rec}, we have 
\[
\hat H_n
\overset{d}{=} 
\hat H_{I_n}
+t_n,
\]
where $t_n$ is exponential with rate $\vartheta(n-1)$,  
and $I_n$ is as before. In other words, we simply 
replace $1$ with $t_n$ in \eqref{E_rec}. 

Estimates for the leading order terms in 
$\hat\mu_n = \E(\hat H_n)$ and 
$\hat\sigma_n^2={\rm Var}(\hat H_n)$
are given in \cite{AP23}. More recently, 
an asymptotic expansion for $\hat\mu_n$
has been obtained \cite[Theorem 1.1]{AJ24b}. 
In particular, we have that 
\[
\hat\mu_n = \hat A\log n +\hat B+O(1/n),
\]
where
\[
\hat A=\frac{1}{\zeta(2)},\quad\quad
\hat B=\frac{\gamma}{\zeta(2)}
+\frac{\zeta(3)}{\zeta(2)^2}.
\]
Moreover, by \cite[Theorem 1.3]{AJ24b}, 
\[
\hat\sigma_n^2
=\hat A_*\log n +\hat B_* +O(\log n/n),
\]
where
\[
\hat A_*=\frac{2\zeta(3)}{\zeta(2)^2},\quad\quad
\hat B_*=
-\frac{3}{5 \zeta(2)}
+\frac{2\gamma\zeta(3)}{\zeta(2)^3}
+\frac{5\zeta(3)^2}{\zeta(2)^4}. 
\]

The proof of 
Theorem \ref{T_CLT2} for $H_n$ can be adapted 
to  the case $\hat H_n$ as follows: 
First, we note that $1$ is replaced by $t_n$ in $b^{(n)}$. 
Next, in all of $b^{(n)}$, $\tau_n$ and $G_n$, 
we replace $A_*$ with $\hat A_*$ and $\ell_n^{3/2}$
with $\ell_n^{1/2}$.  
Then, 
along similar lines (we omit the details), it can be shown that 
the expected values of all quantities in \eqref{E_exp52}
are bounded by $O( \log^{- 5/2})$, as before. 
Therefore, by essentially the same proof, we obtain the following result.  

\begin{theorem} 
\label{T_CLThatH}
Let $\hat H_n$ be the time-height of a uniformly 
random leaf in the critical beta-splitting tree $\cT_n$. 
Then
\[
\hat H_n^*=(\hat H_n-\hat \mu_n)/\hat \sigma_n
\]
is asymptotically standard normal distributed. 
Furthermore, for any $\eps>0$, 
\[
\zeta_3(\hat H_n^*,Z)
=O(\log^{- 1/2+\eps} n),
\]
where $Z$ is a standard normal random variable. 
\end{theorem}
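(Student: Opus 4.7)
The plan is to rerun the proof of Theorem \ref{T_CLT2}, adapting it via the substitutions described in the paragraph preceding the statement. As noted, the distributional recurrence $\hat H_n \overset{d}{=} \hat H_{I_n} + t_n$ fits the contraction framework in the same borderline form as for $H_n$, now with $\alpha = 1/2$ and normalizing power $\ell_n^{1/2}$ in place of $\ell_n^{3/2}$. With
\[
b^{(n)}=\frac{t_n-\hat\mu_n+\hat\mu_{I_n}}{\sqrt{\hat A_*}\,\ell_n^{1/2}},
\qquad
\tau_n=\frac{\hat\sigma_n}{\sqrt{\hat A_*}\,\ell_n^{1/2}},
\qquad
G_n=\frac{\hat\sigma_{I_n}}{\sqrt{\hat A_*}\,\ell_n^{1/2}},
\]
I would first establish the analogue of Claim \ref{C_beta52}, namely $\zeta_3(\hat Z_n^*,\hat N_n) = O(\log^{-5/2} n)$, by controlling the expected value of each of the six terms in the analogue of \eqref{E_exp52}.

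The three main inputs are: the moment bound $\E t_n^k = k!/\vartheta(n-1)^k = O(\log^{-k} n)$ on the exponential toll; the expansion $\hat\mu_n = \hat A\log n + \hat B + O(1/n)$ of \cite[Theorem 1.1]{AJ24b}, which yields $|\hat\mu_n - \hat\mu_{I_n}| = O(\log(n/I_n) + 1/I_n)$, playing the role of \eqref{E_kappa}; and the expansion $\hat\sigma_n^2 = \hat A_* \log n + \hat B_* + O(\log n/n)$ of \cite[Theorem 1.3]{AJ24b}, which gives $|\tau_n - 1| = O(1/\log n)$ and
\[
|\tau_n^2-G_n^2|=O\bigl(|\log(I_n/n)|/\log n+1/\log n\bigr).
\]
Combining these with the moment estimates \eqref{E_In/n} for $\log(I_n/n)$, Minkowski's inequality, and the elementary inequality $(1+x)^{3/2}\le 1+(3/2)x+x^{3/2}$, each term in the analogue of \eqref{E_exp52} contributes $O(\log^{-5/2} n)$, exactly as in the discrete case.

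Next, I would apply an analogue of Claim \ref{C_ind}, with the exponent $9/2 = 3\alpha$ (for $\alpha = 3/2$) replaced by $3/2 = 3\alpha$ (for $\alpha = 1/2$): if $(d_n)$ and $(r_n)$ satisfy
\[
d_n\le\E\left[\left(\frac{\ell_{I_n}}{\ell_n}\right)^{3/2} d_{I_n}\right]+r_n
\]
with $r_n=O(\log^{-5/2} n)$, then by the same inductive argument, using \eqref{E_In/n} to bound $\E(\ell_{I_n}/\ell_n)$ and the fact that $\ell_{I_n}/\ell_n \le 1$ to absorb the higher power, we get $d_n = O(\log^{-1/2+\eps} n)$. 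Applied to $d_n = \zeta_3(\hat H_n^*,Z)$, this yields the stated rate and hence weak convergence.

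The main obstacle, as in the discrete case, is the bound on $\E|\tau_n^2 - G_n^2|^{3/2}$: matching the target rate $O(\log^{-5/2} n)$ requires the refined variance expansion with the second-order constant $\hat B_*$. A leading-order estimate $\hat\sigma_n^2 = \hat A_* \log n + O(1)$ would yield only $O(\log^{-3/2} n)$, insufficient to close the induction, so the recent result \cite[Theorem 1.3]{AJ24b} is essential here, just as Lemma \ref{L_varHn2} was in the discrete case.
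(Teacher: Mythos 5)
Your proposal is correct and follows essentially the same route as the paper, which proves Theorem \ref{T_CLThatH} by exactly these substitutions ($1\mapsto t_n$, $A_*\mapsto\hat A_*$, $\ell_n^{3/2}\mapsto\ell_n^{1/2}$) in the proof of Theorem \ref{T_CLT2}, rebounding the six terms of \eqref{E_exp52} by $O(\log^{-5/2}n)$ and rerunning the induction of Claim \ref{C_ind} with exponent $3\alpha=3/2$. You also correctly pinpoint the role of the refined expansion $\hat\sigma_n^2=\hat A_*\log n+\hat B_*+O(\log n/n)$ from \cite[Theorem 1.3]{AJ24b} in the bound on $\E|\tau_n^2-G_n^2|^{3/2}$, which is precisely the analogue of the role played by Lemma \ref{L_varHn2} in the discrete case.
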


\subsection{Collisions in the $\beta(2,b)$-coalescent}
\label{S_Collisions}

Finally, as mentioned in Section \ref{S_purpose} above, 
let us discuss
the central limit theorem proved 
in \cite{IMM09}
for the number of collisions $X_n$ 
in the {\it $\beta(2,b)$-coalescent}. 
See, e.g., Pitman \cite{Pitman99}, Sagitov \cite{Sag99}
and  Gnedin, Iksanov and Marynych \cite{GIM14}
for background. 

In \cite[(2)]{IMM09}, there is a similar recurrence 
as \eqref{E_rec} above. Also, 
for similar reasons as in the current article 
(compare  \eqref{E_In/n} 
with \cite[Remark 3.2]{IMM09}), 
Theorem \ref{T_NR04b} 
does not apply as stated. 
In \cite[(14)]{IMM09}, 
an alternative, more complicated, 
recurrence is derived for which  
Theorem \ref{T_NR04b} applies. 
However, the authors
ask \cite[Remark 1.6]{IMM09}
if a more direct proof is possible, 
using only the original recursion \cite[(2)]{IMM09}.  

By \cite[Theorem 1.1]{IMM09}, 
\begin{align}\label{E_mCn}
\E(X_n)&=a\log^2 n+b\log n+O(1),\nonumber \\
{\rm Var}(X_n)&=a_*\log^3 n+O(\log^2n),
\end{align}
for explicit constants $a$, $b$ and $a_*$
(cf.\ \eqref{E_EHn} and \eqref{E_VarHn} above). 
Hence, if the $O(\log^2n)$ term in the expansion for 
${\rm Var}(X_n)$ can be replaced with some
$b_*\log^2n +O(\log n)$, 
the arguments in the current article
would give an answer to the question 
in \cite[Remark 1.6]{IMM09}. 

Martin M\"{o}hle has informed us, by private
communication, that the existence of $b_*$ 
is determined by whether 
$v_n$ in \cite[Lemma A.2]{IMM09} oscillates or converges. 
See Proposition 2.1 in M\"{o}hle \cite{Moh14}
for a related result. This result, however, does not apply
due to a lack of information about hitting probabilities 
in the $\beta(2,b)$-coalescent
block counting process. 
Finding $b_*$ (if it exists) might be an interesting 
question in its own right.

\appendix

\section{Technical estimates}\label{A_sums}

In this section, we prove
\eqref{E_log3} and \eqref{E_munk}
used in the proof 
of Lemma \ref{L_varHn2}.

\subsection{Estimate \eqref{E_log3}}

\begin{proof}[Proof of \eqref{E_log3}]
In the proof of Proposition 2.10 in \cite{AP23},
it is observed that 
\begin{align}
\frac{1}{\vartheta(n-1)}
\sum_{k=1}^{n-1}\frac{\log^3  k}{n-k}
&=\log^3 n 
+\frac{3\log^2 n}{\vartheta(n-1)}
\sum_{k=1}^{n-1}\frac{\log(k/n)}{n-k}\nonumber\\
&\quad
+\frac{3\log n}{\vartheta(n-1)}
\sum_{k=1}^{n-1}\frac{\log^2 (k/n)}{n-k}
+\frac{1}{\vartheta(n-1)}
\sum_{k=1}^{n-1}\frac{\log^3 (k/n)}{n-k}.\label{E_prop210} 
\end{align}

As noted in \cite{AP23},
\[
\log n=
\vartheta(n-1)-\gamma+O(1/n). 
\]
In particular, 
\begin{equation}\label{E_log/theta}
\frac{\log n}{\vartheta(n-1)}
=1-\frac{\gamma}{\log n}+O(\log^{-2} n). 
\end{equation}

Lemma 2.2 in \cite{AP23} implies that 
\begin{equation}\label{E_log1}
\sum_{k=1}^{n-1}\frac{\log(k/n)}{n-k}
=-\zeta(2)+O(1/n).
\end{equation}
Furthermore, in the proof of Proposition 2.5 in \cite{AP23}
it is observed  
(using Euler's summation formula; see (11) in \cite{AP23}) 
that 
\begin{equation}\label{E_log2}
\sum_{k=1}^{n-1}\frac{\log^2(k/n)}{n-k}
=2\zeta(3)+O(\log^2 n/n).
\end{equation}
More generally, 
\begin{equation}\label{E_logr}
\sum_{k=1}^{n-1}\frac{\log^r (k/n)}{n-k}
=(-1)^rr!\zeta(r+1)+O(\log^r n/n),
\end{equation}
for $r\ge1$.

Combining \eqref{E_prop210}--\eqref{E_logr},
we obtain \eqref{E_log3}. 
\end{proof}

\subsection{Estimate \eqref{E_munk}}

\begin{proof}[Proof of \eqref{E_munk}]
Using 
\eqref{E_AJ24}, let us write 
\[
\mu_n 
=
A\log^2n
+B\log n
+C+\xi(n),
\]
where
$\xi(n)=O(\log n /n)$. 
Then  
\begin{align}
(\mu_n - \mu_k)^2
&=[(A\log(k/n)+2A\log n+B)\log(k/n)+\xi(k)-\xi(n)]^2\nonumber \\
&=4A^2\log^2(k/n) \log^2 n
+[4AB\log^2(k/n)+4A^2\log^3(k/n)]\log n\nonumber \\
&\quad+O[\log^2(k/n)+\log^3(k/n)+\log^4(k/n)]\nonumber \\
&\quad+O[\xi(k)(\xi(k)+\log(k/n)\log n)]. \label{E_munkexp}
\end{align}

As noted
below (21) in \cite{AP23}, we have
\[
\sum_{k=1}^{n-1}\frac{1}{k^2(n-k)}=O(1/n)
\]
and
\[
\sum_{k=1}^{n-1}\frac{\log(n/k)}{k(n-k)}=O(\log^2 n/n).
\]
Therefore
\begin{equation}\label{E_xi1}
\sum_{k=1}^{n-1}\frac{\xi(k)^2}{n-k}
\le O(\log^2 n)
\sum_{k=1}^{n-1}\frac{1}{k^2(n-k)}
=O(\log^2 n/n)
\end{equation}
and
\begin{equation}\label{E_xi2}
\sum_{k=1}^{n-1}\frac{\xi(k)\log(k/n)}{n-k}
\le O(\log n)\sum_{k=1}^{n-1}\frac{\log(k/n)}{k(n-k)}
=O(\log^3 n/n).
\end{equation}

Applying  
\eqref{E_log/theta} and 
\eqref{E_logr}--\eqref{E_xi2},
we obtain \eqref{E_munk}. 
\end{proof}


\makeatletter
\renewcommand\@biblabel[1]{#1.}
\makeatother

\providecommand{\bysame}{\leavevmode\hbox to3em{\hrulefill}\thinspace}
\providecommand{\MR}{\relax\ifhmode\unskip\space\fi MR }
\providecommand{\MRhref}[2]{%
  \href{http://www.ams.org/mathscinet-getitem?mr=#1}{#2}
}
\providecommand{\href}[2]{#2}

\end{document}